\newtheorem{teo}{Theorem}
\newtheorem{lemma}{Lemma}
\newtheorem{prop}{Proposition}
\newtheorem{remark}{Remark}
\newenvironment{sistema}%
{\left\lbrace\begin{array}{@{}l@{}}}%
{\end{array}\right.}
\begin{document}

\title{\textbf{A mechanical counterexample to KAM theory with low regularity}}
\author{\textbf{Stefano Marò} \\
\textit{\small{Dipartimento di Matematica - Università di Torino}}\\
\textit{\small{Via Carlo Alberto 10, 10123 Torino - Italy}}\\
\textit{\small{e-mail: stefano.maro@unito.it}}
}
\date{}

\maketitle

\begin{abstract}
We give a mechanical example concerning the fact that some regularity is necessary in KAM theory. We consider the model given by the vertical bouncing motion of a ball on a periodically moving plate. Denoting with $f$ the motion of the plate, some variants of Moser invariant curve theorem apply if $\dot{f}$ is small in norm $C^5$ and every motion has bounded velocity. This is not possible if the function $f$ is only $C^1$. Indeed we construct a function $f\in C^1$ with arbitrary small derivative in norm $C^0$ for which a motion with unbounded velocity exists.         
\end{abstract}

%

\section{Introduction}
Moser invariant curve theorem \cite{moserinvcurve} is of fundamental importance to study the stability of the solutions of Hamiltonian systems \cite{moserstable,siegelmoser}. It deals with the existence of invariant curves for some diffeomorphisms of the cylinder that are "close" enough to an integrable twist map. 
More precisely, the map 
\begin{equation}
\begin{sistema}
\theta_1=\theta_{0}+\alpha(r_{0})+R_1(\theta_0,r_0)
\\
r_1=r_{0}+R_2(\theta_0,r_0)
\end{sistema}
\end{equation}
with $\alpha^\prime>0$ has invariant curves if it possesses the intersection property and 
\begin{equation}\label{small}
||R_1||_{C^{333}(\mathcal{C})}+||R_2||_{C^{333}(\mathcal{C})}<\epsilon
\end{equation}
for $\epsilon$ sufficiently small. The assumption on the regularity of the map was very strong. The question of how crucial was this regularity appeared very soon. Takens \cite{takens} gave a first
counterexample in class $C^1$ and successively Herman
\cite{herman1} improved it giving another counterexample in class $C^{3-\varepsilon}$ where $\varepsilon$ is a small positive constant. Recently, Wang \cite{wang} proved a related result for a Hamiltonian system with $d$ degrees of freedom. The examples by Takens and Herman were constructed as perturbations of the integrable map in the class of symplectic diffeomorphisms.   

Our purpose is different, indeed we are going to construct an example in class $C^0$ that comes from a mechanical model. The model describes the vertical motion of a bouncing ball on a moving plate. The plate is moving in the vertical direction as a $1$-periodic function $f$ and the gravity force is acting on the ball. Moreover we suppose that the bounces are elastic and do not affect the movement of the plate. This is a very simple mechanical model with interesting dynamics and has been considered by several authors. See \cite{kunzeortega2,dolgo,qiantorres,holmes} and references therein for more information. The motion of the ball can be described by an exact symplectic twist map that is close to the integrable twist map if the velocity is small \cite{marobounce}. Hence a direct application of Moser's theorem shows that if the velocity of the plate $\dot{f}$ is small in norm $C^{333}$ then invariant curves  exist. It means that the velocity of the ball is always bounded. The smallness of $\dot{f}$ is essential for the boundedness of the velocity. Indeed Pustyl'nikov \cite{pustsoviet} proved that if $\dot{f}$ is sufficiently large then motions of the ball with unbounded velocity exist. We are going to prove that some regularity is needed as well. Precisely given $\delta$ arbitrary small, we are going to construct a concrete function $f\in C^1(\mathbb{R}/\mathbb{Z})$ with $\sup|\dot{f}|\leq\delta$ such that the corresponding model admits a motion of the ball with unbounded velocity. Thus invariant curves cannot exist and Moser's theorem cannot hold in this context. With some care it is possible to adapt our construction so that the function $f$ belongs to a smooth class $C^k (\mathbb{R}/\mathbb{Z})$ with $k\geq 2$ and still it produces unbounded motions. At first sight this could seem contradictory with Moser's theorem but it must be noticed that the $C^k$ norm of this function will be large. An interesting open problem is to determine the optimal $k$ for which all motions have bounded velocity when $||f||_{C^k}$ is sufficiently small. On this line we refer to the work of Zharnitsky \cite{zarpingpong} to see a similar result on the Fermi-Ulam ping-pong model. Our idea of constructing the function is different to Zharnitsky's idea: we start from the result of Pustyl'nikov on the unbounded motion. He constructed an orbit that in the torus $\mathbb{R}/\mathbb{Z}\times\mathbb{R}/\mathbb{Z}$ becomes a fixed point. Our idea will be to look at $N$-cycles in the same torus. This approach will give weaker conditions to generate unbounded orbits. After some technical work, it  will allow us to construct the function $f$ and the corresponding unbounded orbit.

\section{Statement of the problem}
We are concerned with the problem of the motion of a bouncing ball on a vertically moving plate. We assume that the impacts do not affect the motion of the plate that is supposed to move like a function $f\in C^1(\mathbb{R}/\mathbb{Z})$. The linear momentum and the energy are preserved between the bounces, thus the motion is described by the following map    
\begin{equation}\label{unb}
P_f:
\begin{sistema}
t_1=t_{0}+\frac{2}{g}v_{0}-\frac{2}{g}f[t_1,t_{0}]
\\
v_1=v_{0}+2\dot{f}(t_1)-2f[t_1,t_{0}]
\end{sistema}
\end{equation}
where 
$$
f[t_1,t_{0}]=\frac{f(t_1)-f(t_0)}{t_1-t_0}.
$$
Here the coordinate $t$ represents the impact time. The coordinate $v$ represents the velocity of the ball immediately after the impact. This is the formulation considered by Pustil'nikov in \cite{pustsoviet}. Another approach based on differential equations was considered by Kunze and Ortega \cite{kunzeortega2} and leads to a map that is equivalent to (\ref{unb}), see \cite{marobounce}. The map is implicit and is well defined for $v>\bar{v}$ for some $\bar{v}$ sufficiently large. Moreover, by the periodicity of the function $f$, the coordinate $t$ can be seen as an angle. Hence the map $P_f$ is defined on the half cylinder $\mathbb{T}\times (\bar{v},+\infty)$, where $\mathbb{T}=\mathbb{R/\mathbb{Z}}$. \\
If $f\in C^6$, consider the strip $\Sigma_a=\mathbb{T}\times [a,a+k]$ with $a>\bar{v}$ and $k$ sufficiently large. A simple application of Moser invariant curve theorem \cite{moserinvcurve} in the form \cite{ortegaadv} gives the existence of an invariant curve of $P_f$ in $\Sigma_a$ if
\begin{equation}\label{piccolo}
||\dot{f}||_{C^5[0,1]}\leq\delta
\end{equation}
for some $\delta$ sufficiently small. Invariant curves act as barriers so that repeating the argument for $a\to+\infty$ one can prove that if condition (\ref{piccolo}) is satisfied then every orbit $(t_n^*,v_n^*)$ of $P_f$ is such that
$$
\sup_{n\in\mathbb{Z}}v_n^*<\infty.
$$     
This result depends on the regularity of $f$. More precisely we shall prove the following result 
\begin{teo}\label{main}
For every $0<\delta<\frac{g}{4}$ there exists $f\in C^1(\mathbb{R}/\mathbb{Z})$ and an initial condition $(t_0^*,v_0^*)$ such that:
\begin{enumerate}
\item $||\dot{f}||_{C^0[0,1]}\leq\delta$,
\item the orbit of $P_f$ with initial condition $(t_0^*,v_0^*)$ satisfies
$$
t_{n+N}^*=t_n^*+\sigma_n,\quad \sigma_n\in\mathbb{N}
$$
$$
v^*_{n+N}=v_n^*+\frac{g}{2}V \quad\mbox{for some } V\in\mathbb{N}\setminus\{0\}
$$
for every $n\in\mathbb{N}$ and for some $N$ sufficiently large of order $1/\delta$.
\end{enumerate}
\end{teo}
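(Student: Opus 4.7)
The plan is to realise as a true orbit the $N$-cycle on the torus $\mathbb{T}\times(\mathbb{R}/(g/2)\mathbb{Z})$ alluded to in the introduction. Write $\tau_j = t_j^*\bmod 1$ and $s_j = t_{j+1}^*-t_j^*$. Using the $1$-periodicity of $f$, the two equations defining $P_f$ give
\[
v_j^* = \frac{g s_j}{2}+\frac{f(\tau_{j+1})-f(\tau_j)}{s_j},\qquad
v_{j+1}^*-v_j^* = 2\dot f(\tau_{j+1})-2\,\frac{f(\tau_{j+1})-f(\tau_j)}{s_j},
\]
while the closing condition $v_{j+N}^*-v_j^* = (g/2)V$ forces $s_{j+N}-s_j=V$. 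Eliminating $v_j^*$ produces the scalar identity
\[
\dot f(\tau_{j+1}) = \frac{g(s_{j+1}-s_j)}{4}+\frac{1}{2}\left[\frac{f(\tau_{j+2})-f(\tau_{j+1})}{s_{j+1}}+\frac{f(\tau_{j+1})-f(\tau_j)}{s_j}\right].
\]
Summing over one period yields $\sum_{j=1}^{N}\dot f(\tau_j)\simeq gV/4$, so the constraint $||\dot f||_{C^0}\le\delta<g/4$ forces $V=1$ and $N\gtrsim g/(4\delta)$, matching the order in the statement.

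Next I would design the candidate cycle. Fix $N$ just above $g/(4\delta)$, a large integer $M$, and reals $\gamma_0,\dots,\gamma_{N-1}$ near $1/N$ with $\sum_j\gamma_j=1$ such that the partial sums $\sigma_j=\sum_{k<j}\gamma_k\bmod 1$ are pairwise distinct in $[0,1)$ (any small generic perturbation of $j/N$ works). Set $\tau_j=\sigma_j$ and $s_j=M+\gamma_j$, and extend by $s_{j+N}=s_j+1$. Then $|s_{j+1}-s_j|=O(1/N)$, so the asymptotic target $\dot f(\tau_{j+1})=g(s_{j+1}-s_j)/4$ has $\ell^\infty$-norm at most $\delta$ and sums to exactly $g/4$ over a period. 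One then builds $f\in C^1(\mathbb{T})$ realising those prescribed derivative values (together with any small values $f(\tau_j)$ required, which enter only through $O(1/M)$ corrections): because the constraints are pointwise, bounded by $\delta$, and have vanishing average, a standard piecewise Hermite/bump interpolation produces such $f$ with $||\dot f||_{C^0}\le\delta$.

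Finally, one verifies that the initial datum $(t_0^*,v_0^*)$ with $v_0^*$ close to $gs_0/2$ generates an orbit coinciding with the designed cycle. Because $P_f$ is implicit, this step is not automatic: view the $2N$ equations determining $(t_j^*,v_j^*)_{j=1}^{N}$ from $(t_0^*,v_0^*)$ as a perturbation of size $O(1/M)$ of their integrable counterpart (which is trivially solved by the prescribed $\tau_j, s_j$) and apply a contraction argument; the twist relation $\partial t_{j+1}/\partial v_j=2/g+O(1/M)\neq 0$ provides the needed Jacobian invertibility. For $M$ large enough in terms of $N$ and $\delta$, the true $N$-periodic orbit on the torus exists and, iterated by $P_f$, lifts to the unbounded orbit with $v_{n+N}^*-v_n^*=g/2$ required by the theorem. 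The main obstacle is exactly this compatibility step --- constructing $f$ from the asymptotic prescription and then guaranteeing that it supports an honest orbit of the implicit map --- and is overcome by taking $M$ (equivalently $v_0^*$) sufficiently large so that the $O(1/M)$ defects are absorbed uniformly across the $N$ impacts.
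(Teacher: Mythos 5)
There is a genuine gap, and it sits exactly at the point you flag as ``the main obstacle.'' Your final step treats $P_f$ as if it induced a map on the torus $\mathbb{R}/\mathbb{Z}\times\mathbb{R}/\frac{g}{2}\mathbb{Z}$, so that an $N$-cycle arranged over the first $N$ impacts ``lifts'' to $v_{n+N}^*=v_n^*+\frac{g}{2}$ for all $n$. But $P_f$ does not commute with $(t,v)\mapsto(t,v+\frac{g}{2})$: the divided-difference term $f[t_1,t_0]=\frac{f(t_1)-f(t_0)}{t_1-t_0}$ changes when the flight time changes by an integer, so only the explicit map $GS$ (where this term vanishes) has the required equivariance. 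The paper's proof hinges on forcing $f(t_0^*)=f(t_1^*)=\dots=f(t_{N-1}^*)$, which makes every divided difference vanish along the orbit; the orbit is then an \emph{exact} orbit of $GS$, and the exact relations $t_{n+N}^*=t_n^*+\sigma_n$, $v_{n+N}^*=v_n^*+\frac{g}{2}V$ propagate to all $n$ by the induction of Lemma~\ref{stand}. You never impose equal values of $f$ at the impact times (you allow ``any small values $f(\tau_j)$''), so even if your scheme produced an orbit close to the designed cycle, the $O(1/M)$ defects would not vanish: you would get only approximate recurrence relations, not the exact integer shifts $\sigma_n\in\mathbb{N}$ and the exact velocity jump $\frac{g}{2}V$ demanded by the theorem, and even mere unboundedness would then require a separate analysis of how the errors accumulate over infinitely many periods, which ``taking $M$ large once'' does not control uniformly.

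A second, related gap: the contraction argument you invoke only solves the implicit equations forward, i.e.\ it shows the orbit of $(t_0^*,v_0^*)$ is well defined at large velocities (which the paper already has via $v>\bar v$); it does not make the orbit close up exactly modulo $(W,\frac{g}{2})$. That closing is a fixed-point problem for the $N$-th iterate on the quotient, which you neither formulate nor solve, and which is in any case unavailable because $P_f$ does not descend to that quotient. The paper avoids all of this by an exact, explicit construction: impact times from the second-order recursion $t_{n+1}=\frac{4}{g}\delta+2t_n-t_{n-1}$ with $V=1$, $N$ of order $g/(4\delta)$ and $W=N^2(N-1)$ (Lemma~\ref{stand3}), and $f$ obtained as the primitive of a piecewise-linear $\zeta$ with $|\zeta|\leq\delta$, prescribed values $\zeta(t_k)=D_k$, and zero signed area on each $[t_k,t_{k+1}]$ so that $f$ takes the same value at all impact times (Proposition~\ref{prop2}). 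Your heuristic count $\sum_j\dot f(\tau_j)\simeq gV/4$, forcing $V=1$ and $N\gtrsim g/(4\delta)$, does match the paper's scaling, but the exact construction that turns it into an honest orbit of the implicit map is precisely what is missing.
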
  
Throughout the paper $\mathbb{N}$ is the set of non-negative integer numbers (including 0). 

\section{Unbounded orbits}
In this section we are going to construct unbounded orbits for the map $P_f$. We will obtain some intricate conditions that generalize Pustil'nikov's result. The fundamental observation is that the map $P_f$ shares some orbits with a generalized standard map. More precisely, if $(t_n^*,v_n^*)_{n\in\mathbb{Z}}$ is a complete orbit satisfying 
\begin{equation}\label{condug}
f(t_n^*)=f(t_0^*)\quad \mbox{for every $n\in\mathbb{Z}$}
\end{equation}
then $f[t^*_n,t^*_{n-1}]=0$ for every $n\in\mathbb{Z}$ and $(t_n^*,v_n^*)_{n\in\mathbb{Z}}$ becomes a complete orbit for the generalized standard map
\begin{equation}\label{standard}
GS:
\begin{sistema}
t_1=t_{0}+\frac{2}{g}v_{0}
\\
v_1=v_{0}+2\dot{f}(t_1).
\end{sistema}
\end{equation}
The converse is also true: if $(t_n^*,v_n^*)_{n\in\mathbb{Z}}$ is a complete orbit of $GS$ with $v_n>\bar{v}$ for every $n$ and satisfying condition (\ref{condug}) then it is also an orbit for $P_f$. This fact will be crucial in the following.  
We start constructing unbounded orbits for $GS$.
\begin{lemma}\label{stand}
Let $t_0^* < t_1^*$ be real numbers and let $(t_n^*,v_n^*)_{n\in\mathbb{Z}}$ be the orbit of the map $GS$ with initial conditions $t_0=t_0^*, v_0=v^*_0=g(t_1^*-t_0^*)/2$. Suppose that there exist three positive integers $N, W, V$ such that
\begin{enumerate}
\item $N(t_1^*-t_0^*)+\frac{4}{g} \sum_{k=1}^{N-1}(N-k)\dot{f}(t_k^*)=W$,
\item $\frac{4}{g}\sum_{k=0}^{N-1}\dot{f}(t_k^*)=V$.
\end{enumerate}
Then
$$
t_{n+N}^*=t_n^*+\sigma_n,\quad \sigma_n\in\mathbb{N}
$$
$$
v^*_{n+N}=v_n^*+\frac{g}{2}V.
$$
Moreover, there exists $T>0$ such that if $t_1^*-t_0^*>T$ then $v^*_n>\bar{v}$ for every $n\geq 0$. 

\end{lemma}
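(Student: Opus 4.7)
The plan is to iterate the generalised standard map explicitly, recognise conditions 1 and 2 as the two natural closure identities after $N$ steps, and then propagate the resulting $N$-periodicity in the torus to all future iterates by induction.

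First I would unwind the recursion directly. From $v_{n+1}^*=v_n^*+2\dot f(t_{n+1}^*)$ one gets $v_n^*=v_0^*+2\sum_{k=1}^{n}\dot f(t_k^*)$, and consequently $t_{n+1}^*-t_n^*=(2/g)v_n^*=(t_1^*-t_0^*)+(4/g)\sum_{k=1}^{n}\dot f(t_k^*)$. Telescoping from $0$ to $N$ and swapping the order of summation (Abel-style),
$$t_N^*-t_0^*=N(t_1^*-t_0^*)+\frac{4}{g}\sum_{k=1}^{N-1}(N-k)\dot f(t_k^*),$$
which is exactly $W\in\mathbb{N}$ by hypothesis 1. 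In particular $t_N^*\equiv t_0^*\pmod 1$, so by the $1$-periodicity of $\dot f$ one has $\dot f(t_N^*)=\dot f(t_0^*)$, and hypothesis 2 rewrites as
$$v_N^*-v_0^*=2\sum_{k=1}^{N}\dot f(t_k^*)=2\sum_{k=0}^{N-1}\dot f(t_k^*)=\frac{g}{2}V.$$

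With the base step $n=0$ in hand (take $\sigma_0=W$), I would then argue by induction on $n\ge 0$. Assume $t_{n+N}^*=t_n^*+\sigma_n$ with $\sigma_n\in\mathbb{N}$ and $v_{n+N}^*=v_n^*+gV/2$. Applying the first equation of $GS$,
$$t_{n+N+1}^*=t_{n+N}^*+\frac{2}{g}v_{n+N}^*=t_n^*+\sigma_n+\frac{2}{g}v_n^*+V=t_{n+1}^*+(\sigma_n+V),$$
so $\sigma_{n+1}:=\sigma_n+V\in\mathbb{N}$. Applying the second equation and using $\sigma_n+V\in\mathbb{N}$ together with the $1$-periodicity of $\dot f$,
$$v_{n+N+1}^*=v_{n+N}^*+2\dot f(t_{n+N+1}^*)=v_n^*+\frac{g}{2}V+2\dot f(t_{n+1}^*)=v_{n+1}^*+\frac{g}{2}V,$$
which closes the induction.

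For the final bound on velocities, I would use the $N$-step periodicity just established to write any $n\ge 0$ as $n=mN+r$ with $0\le r<N$, $m\ge 0$, so that $v_n^*=v_r^*+mgV/2\ge v_r^*$ since $V\ge 1$. For $0\le r<N$ the crude estimate $v_r^*=v_0^*+2\sum_{k=1}^{r}\dot f(t_k^*)\ge v_0^*-2N\|\dot f\|_{C^0[0,1]}$ applies, and $v_0^*=g(t_1^*-t_0^*)/2$. Choosing $T>\frac{2}{g}\bar v+\frac{4N}{g}\|\dot f\|_{C^0[0,1]}$ therefore forces $v_n^*>\bar v$ for all $n\ge 0$. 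The main obstacle is purely bookkeeping, namely keeping track of where periodicity of $\dot f$ is used (it requires $t_N^*-t_0^*\in\mathbb{N}$, which is precisely what condition 1 provides); once condition 1 and condition 2 have been interpreted as the closure of the orbit in the time and velocity directions modulo the lattice $\mathbb{Z}\times(g/2)\mathbb{Z}$, the rest is induction and a straightforward lower bound.
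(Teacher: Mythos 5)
Your proof is correct, and it reorganizes the induction in a way that is genuinely different from, and somewhat cleaner than, the paper's argument. The paper also starts from the explicit iterates $v_n=v_0+2\sum_{k=1}^{n}\dot f(t_k)$ and $t_n=t_0+\frac{2}{g}nv_0+\frac{4}{g}\sum_{k=1}^{n-1}(n-k)\dot f(t_k)$ and gets the base case $t_N^*=t_0^*+W$ just as you do, but it then proves only the time relation $t_{N+j}^*=t_j^*+\sigma_j$ by strong induction; the delicate point there is showing that $\frac{4}{g}\sum_{k=0}^{N-1}\dot f(t^*_{k+j})$ is an integer, which the paper handles by writing $k+j=Nd+r$, applying the inductive hypothesis repeatedly to reduce $t^*_{k+j}$ to $t^*_r$ plus an integer, and noting that $r$ runs through a complete residue system mod $N$, so the sum equals $\frac{4}{g}\sum_{r=0}^{N-1}\dot f(t^*_r)=V$; the velocity relation is then deduced afterwards by the same window argument. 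By carrying the time relation and the velocity relation through a single joint induction, you bypass that residue-class bookkeeping entirely: one application of $GS$ gives $t^*_{n+N+1}=t^*_{n+1}+(\sigma_n+V)$, and periodicity of $\dot f$ over an integer shift immediately closes the velocity relation as well, with the bonus that $\sigma_n=W+nV$ comes out explicitly. Your final estimate, with the explicit choice $T>\frac{2}{g}\bar v+\frac{4N}{g}\|\dot f\|_{C^0[0,1]}$ and the reduction $n=mN+r$, also spells out the uniform lower bound that the paper only sketches. What the paper's route buys is the conceptual identity that the sum of $\dot f(t^*_k)$ over any window of $N$ consecutive indices equals $\frac{g}{4}V$ (the $N$-cycle picture on the torus), but both arguments are complete and rest on the same interpretation of conditions 1 and 2 as closure of the orbit modulo $\mathbb{Z}\times\frac{g}{2}\mathbb{Z}$.
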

\begin{proof}
We start noting that from (\ref{standard}) we obtain the following expression for the $n$-th iterate: 
\begin{equation}\label{ast}
v_n=v_0+2\sum_{k=1}^{n}\dot{f}(t_k)
\end{equation}  
\begin{equation}\label{sbiro}
t_n=t_0+\frac{2}{g}nv_0+\frac{4}{g} \sum_{k=1}^{n-1}(n-k)\dot{f}(t_k).
\end{equation}
We claim that for every $j\in\mathbb{N}$, there exists $\sigma_j\in\mathbb{N}$ such that
\begin{equation}\label{funa}
t_{N+j}^*=t_j^*+\sigma_j.
\end{equation}
Let us prove it by induction on $j$. The fact that $v_0^*=g(t_1^*-t_0^*)/2$ and the hypothesis, together with (\ref{sbiro}) give the first step for $j=0$ with $\sigma_0=W$. Note that by periodicity we also have $\dot{f}(t_N^*)=\dot{f}(t_0^*)$.\\
Now suppose that $t_{N+i}^*=t_i^*+\sigma_i$ for every $i<j$. Using (\ref{standard}) we have
\begin{equation}
\begin{split}
&t_{N+j}^*=t^*_{N+j-1}+\frac{2}{g}v^*_{N+j-1}=t^*_{j-1}+\sigma_{j-1}+\frac{2}{g}[v^*_{j-1}+2\sum_{k=0}^{N-1}\dot{f}(t^*_{k+j})]=\\
&(t^*_{j-1}+\frac{2}{g}v^*_{j-1})+\sigma_{j-1}+\frac{4}{g}\sum_{k=0}^{N-1}\dot{f}(t^*_{k+j})=t^*_j+\sigma_{j-1}+\frac{4}{g}\sum_{k=0}^{N-1}\dot{f}(t^*_{k+j}).
\end{split}
\end{equation}
We just need to prove that the last term is an integer. We have that for every $k$, there exist $d\in\mathbb{N}$ and $r\in\{0,\dots,N-1\}$ such that $k+j=Nd+r$. Moreover, the fact that $k\in\{0,\dots,N-1\}$ implies that $N(d-1)+r<j$. This allows to use the inductive hypothesis several times and get
$$
t^*_{k+j}=t^*_{Nd+r}=t^*_{N+N(d-1)+r}=t^*_{N(d-1)+r}+\sigma_{N(d-1)+r}=\dots=t^*_r+\sigma,
$$
where $\sigma\in\mathbb{N}$. Moreover, from the definition, we have that $r$ takes all the values in $\{0,\dots,N-1\}$ as $k$ goes from $0$ to $N-1$. Finally we have
$$
\frac{4}{g}\sum_{k=0}^{N-1}\dot{f}(t^*_{k+j})=\frac{4}{g}\sum_{r=0}^{N-1}\dot{f}(t^*_r)=V
$$
and we conclude using the hypothesis.\\ 
Therefore, from (\ref{ast}), we have
$$
v^*_{N+n}=v^*_n+2\sum_{k=n+1}^{n+N}\dot{f}(t^*_k)=v^*_n+2\sum_{k=0}^{N-1}\dot{f}(t^*_k)=v^*_n+\frac{g}{2}V.
$$
Finally, once more from (\ref{ast}) we have the last assertion remembering that $v_0^*=g(t_1^*-t_0^*)/2$ and $\dot{f}$ is bounded.   
\end{proof}

\begin{remark}
This result has a well-known geometrical interpretation. The map $GS$ satisfies
$$
GS(t_0+1,v_0)=GS(t_0,v_0)+(1,0)
$$
$$
GS(t_0,v_0+\frac{g}{2})=GS(t_0,v_0)+(1,\frac{g}{2}).
$$
It means that $GS$ induces a map on the torus $\mathbb{R}/\mathbb{Z}\times\mathbb{R}/\frac{g}{2}\mathbb{Z}$ and the orbit $(t_n^*,v_n^*)_{n\in\mathbb{Z}}$ becomes an $N$-cycle on this torus.
\end{remark}

We shall use this lemma in the following proposition to find unbounded orbits for the original map $P_f$.

\begin{prop}\label{stand2}
Consider a function $f\in C^1(\mathbb{R}/\mathbb{Z})$ and a sequence $(t_n^*)_{n\in\mathbb{N}}$. Suppose that there exist three positive integers $N, W, V$ such that
\begin{enumerate}
\item $t_N^*-t_0^*=W$,
\item $\frac{4}{g}\dot{f}(t_0^*)+(t_N^*-t_{N-1}^*)-(t_1^*-t_0^*)=V$,
\item $f(t^*_0)=f(t^*_1)=\dots =f(t^*_{N-1})$,
\item $\dot{f}(t_k^*)=\frac{g}{4}(t_{k+1}^*-2t_k^*+t_{k-1}^*)$ for $1\leq k\leq N-1$.
\end{enumerate}
Then if we define $v^*_{n+1}=v^*_{n}+2\dot{f}(t^*_{n+1})$ and $v^*_0=\frac{g(t^*_1-t^*_0)}{2}$ we have that there exists an orbit $(\tau_n^*,\nu_n^*)_{n\in\mathbb{N}}$ of $P_f$ such that $(\tau_n^*,\nu_n^*)=(t_n^*,v_n^*)$ for $0\leq n\leq N$ and  
$$
\tau_{n+N}^*=\tau_n^*+\sigma_n,\quad \sigma_n\in\mathbb{N}
$$
$$
\nu^*_{n+N}=\nu_n^*+\frac{g}{2}V.
$$
Moreover, there exists $T>0$ such that if $t_1^*-t_0^*>T$ then $v^*_n>\bar{v}$ for every $n\geq 0$. 
\end{prop}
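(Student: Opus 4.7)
The plan is to reduce the statement to Lemma~\ref{stand} by showing that the sequence $(t_n^*,v_n^*)_{0\le n\le N}$ coincides with the initial segment of the $GS$-orbit issued from $(t_0^*,v_0^*)$, and then checking that the two arithmetic conditions required by that lemma follow from hypotheses 1 and 2 of the proposition.

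First I would verify the $GS$-orbit property by induction on $n$. Since $v_{n+1}^*$ is already defined through the second equation of (\ref{standard}), only $t_{n+1}^*=t_n^*+\frac{2}{g}v_n^*$ needs to be established. The case $n=0$ is immediate from $v_0^*=g(t_1^*-t_0^*)/2$. Assuming $v_{n-1}^*=\frac{g}{2}(t_n^*-t_{n-1}^*)$ for some $1\le n\le N-1$, the recursion combined with hypothesis~4 gives
\begin{equation*}
v_n^*=v_{n-1}^*+2\dot{f}(t_n^*)=\frac{g}{2}(t_n^*-t_{n-1}^*)+\frac{g}{2}(t_{n+1}^*-2t_n^*+t_{n-1}^*)=\frac{g}{2}(t_{n+1}^*-t_n^*),
\end{equation*}
closing the induction, so $(t_n^*,v_n^*)_{0\le n\le N}$ is the initial segment of a full $GS$-orbit that we continue for $n\ge 0$ by iterating (\ref{standard}).

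Next I would translate hypotheses 1 and 2 of the proposition into the two conditions of Lemma~\ref{stand}. Using the closed form (\ref{sbiro}) with $v_0^*=g(t_1^*-t_0^*)/2$ one has
\begin{equation*}
t_N^*-t_0^*=N(t_1^*-t_0^*)+\frac{4}{g}\sum_{k=1}^{N-1}(N-k)\dot{f}(t_k^*),
\end{equation*}
so hypothesis~1 of Proposition~\ref{stand2} is exactly hypothesis~1 of Lemma~\ref{stand}. For the second condition, formula (\ref{ast}) and the equality $v_{N-1}^*=\frac{g}{2}(t_N^*-t_{N-1}^*)$ just established give
\begin{equation*}
\frac{4}{g}\sum_{k=1}^{N-1}\dot{f}(t_k^*)=(t_N^*-t_{N-1}^*)-(t_1^*-t_0^*),
\end{equation*}
and adding $\frac{4}{g}\dot{f}(t_0^*)$ and invoking hypothesis~2 yields $\frac{4}{g}\sum_{k=0}^{N-1}\dot{f}(t_k^*)=V$.

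Having verified both premises of Lemma~\ref{stand}, I would apply it to obtain the quasi-periodicity $t_{n+N}^*=t_n^*+\sigma_n$, $v_{n+N}^*=v_n^*+\frac{g}{2}V$ for every $n\ge 0$, as well as $v_n^*>\bar{v}$ for all $n\ge 0$ whenever $t_1^*-t_0^*>T$. To promote the $GS$-orbit to an orbit of $P_f$ I would verify condition (\ref{condug}): hypothesis~3 gives $f(t_k^*)=f(t_0^*)$ for $0\le k\le N-1$, and the integer shifts $\sigma_n$ together with the $1$-periodicity of $f$ propagate this equality inductively to every $n\ge 0$. The observation made at the start of Section~3 then identifies the $GS$-orbit with an orbit of $P_f$, and setting $(\tau_n^*,\nu_n^*)=(t_n^*,v_n^*)$ finishes the argument. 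The proof is essentially bookkeeping; the only mildly delicate point is recognizing that hypothesis~4 is precisely the discrete variational equation forcing $(t_n^*)$ to follow $GS$, and then pushing (\ref{condug}) from the first $N$ nodes to the whole orbit through the integer shifts.
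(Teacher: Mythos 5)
Your proof is correct and follows the same overall skeleton as the paper's: reduce everything to Lemma \ref{stand}, then use hypothesis 3 together with the integer shifts $\sigma_n$ and the $1$-periodicity of $f$ to establish (\ref{condug}) and so upgrade the $GS$-orbit to an orbit of $P_f$. Where you diverge is in how the two arithmetic conditions of Lemma \ref{stand} are verified: the paper proves, by two separate inductions on $N$, the telescoping identities $(t_N^*-t_{N-1}^*)-(t_1^*-t_0^*)=\sum_{k=1}^{N-1}T_k$ and $(N-1)t_0^*-Nt_1^*+t_N^*=\sum_{k=1}^{N-1}(N-k)T_k$ with $T_k=t_{k+1}^*-2t_k^*+t_{k-1}^*$, and then substitutes hypothesis 4; you instead first show, by a short induction using hypothesis 4, that $(t_n^*,v_n^*)_{0\le n\le N}$ is the initial segment of the $GS$-orbit issued from $(t_0^*,v_0^*)$, and then read both conditions directly off the closed forms (\ref{ast}) and (\ref{sbiro}) already derived in the proof of Lemma \ref{stand}. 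This is a legitimate and somewhat cleaner route: it replaces the two ad hoc summation identities with the single observation that hypothesis 4 is exactly the $GS$ recursion in disguise, which also makes transparent why the prescribed points coincide with the $GS$ iterates on $0\le n\le N$. Both your argument and the paper's share the same small informality at the final step: the identification of the $GS$-orbit with a $P_f$-orbit tacitly requires $v_n^*>\bar v$, which is only guaranteed under the additional assumption $t_1^*-t_0^*>T$, so strictly speaking the claimed $P_f$-orbit exists under that proviso (as it does in the paper).
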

\begin{proof}
First of all it is not difficult to prove that conditions 3. and 4. imply that $(t_n^*,v_n^*)$ is a partial orbit of $P_f$ for $0\leq n\leq N$. Note that we get the case $n=N$ using condition 1 and the periodicity of $f$.\\  
Hence, to prove our result, it is sufficient to prove that hypothesis 1,2 and 4 allow to apply Lemma \ref{stand}. Indeed the sequence $(t_n)$ coming from Lemma 1 satisfies condition (\ref{funa}). Using hypothesis 3 we have that condition (\ref{condug}) holds and we can repeat the discussion of the beginning of this section.

Let us prove that from hypothesis 2 and 4 we can recover condition 2 in Lemma \ref{stand}. We just have to verify that
$$
(t_N^*-t_{N-1}^*)-(t_1^*-t_0^*)=\frac{4}{g}\sum_{k=1}^{N-1}\dot{f}(t_k^*)
$$ 
and, remembering hypothesis 4, it is sufficient to prove that 
\begin{equation}\label{fds}
(t_N^*-t_{N-1}^*)-(t_1^*-t_0^*)=\sum_{k=1}^{N-1}T_k.
\end{equation}
Here, we denote 
\begin{equation}\label{tk}
T_k=t_{k+1}^*-2t_k^*+t_{k-1}^*.
\end{equation}
Let us prove (\ref{fds}) by induction on $N$. The base case $N=1$ can be easily verified. Now suppose as inductive hypothesis (\ref{fds}) to be true. Using it we have
\begin{equation}
\sum_{k=1}^{N}T_k = \sum_{k=1}^{N-1}T_k+t_{N+1}^*-2t_N^*+t_{N-1}^* = (t_{N+1}^*-t_{N}^*)-(t_1^*-t_0^*)
\end{equation} 
that proves our claim. To get condition 1 of Lemma \ref{stand} we note that, from hypothesis 1 we have
\begin{equation}
\begin{split}
W &= t_N^*-t_0^*=t_N^*-t_0^*+N(t_1^*-t_0^*)-N(t_1^*-t_0^*) \\
  &= N(t_1^*-t_0^*)+(N-1)t_0^*-Nt_1^*+t_N^*.
\end{split}
\end{equation}
Once again using hypothesis 4 we just have to prove that
\begin{equation}\label{tfe}
(N-1)t_0^*-Nt_1^*+t_N^*=\sum_{k=1}^{N-1}[T_k(N-k)]
\end{equation}
where $T_k$ is defined by (\ref{tk}).
Let us prove it by induction on $N$. The base case $N=1$ can be easily verified. Now suppose as inductive hypothesis (\ref{tfe}) to be true. Simple computations give
\begin{equation}
\begin{split}
 \sum_{k=1}^{N}[T_k(N+1-k)]&=\sum_{k=1}^{N-1}[T_k(N+1-k)]+T_N\\
 & =\sum_{k=1}^{N-1}[T_k(N-k)]+\sum_{k=1}^{N-1}T_k+T_N.
\end{split}
\end{equation}
Using the inductive hypothesis and the definition of $T_N$ we get
$$
\sum_{k=1}^{N}[T_k(N+1-k)]=(N-1)t_0^*-Nt_1^*-t_N^*+t_{N+1}^*+t_{N-1}^*+\sum_{k=1}^{N-1}T_k.
$$
Now we can use (\ref{fds}) and get
$$
\sum_{k=1}^{N}[T_k(N+1-k)]=Nt_0^*-(N+1)t_1^*+t_{N+1}^*.
$$
So we can recover also condition 1 in Lemma \ref{stand} and conclude the proof.
\end{proof}

\section{Proof of Theorem \ref{main}}

Proposition \ref{stand2} gives conditions to decide whether a finite sequence $(t_n)_{0\leq n<N}$ "generates" an unbounded orbit of $P_f$. In this section we are going to construct a sequence $(t_n)$ and a function $f$ in such a way that Proposition \ref{stand2} is applicable. The next lemma deals with the construction of the sequence.    

\begin{lemma}\label{stand3}
For every $\delta\in (0,\frac{g}{4})$ there exist three positive integers $N,W,V$ and an increasing sequence $(t_n)_{0\leq n\leq N}$ satisfying the following conditions.
\begin{enumerate}
\item $t_N-t_0=W$,
\item $\frac{4}{g}\eta+(t_N-t_{N-1})-(t_1-t_0)=V$ for some $0<\eta\leq\delta$,
\item $\frac{g}{4}(t_{n+1}-2t_n+t_{n-1})=\delta$ for $1\leq n\leq N-1$.
\end{enumerate}
\end{lemma}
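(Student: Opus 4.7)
The plan is to exploit condition~3 as a discrete linear recursion that entirely determines the shape of the sequence up to two free scalars, after which conditions~1 and~2 reduce to elementary integer-valued constraints that can be met by an explicit choice of parameters.

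Setting $c:=4\delta/g\in(0,1)$, condition~3 reads $t_{n+1}-2t_n+t_{n-1}=c$ for $1\le n\le N-1$, so the first differences $a_n:=t_{n+1}-t_n$ form the arithmetic progression $a_n=a_0+nc$, and therefore
\begin{equation*}
t_n = t_0 + n\,a_0 + c\,\frac{n(n-1)}{2}, \qquad 0\le n\le N.
\end{equation*}
The sequence is strictly increasing iff $a_0>0$, and $t_0$ may be chosen freely. Inserting this formula into the left-hand sides of conditions~1 and~2, one finds
\begin{equation*}
t_N-t_0 = N a_0 + c\,\frac{N(N-1)}{2}, \qquad (t_N-t_{N-1})-(t_1-t_0) = (N-1)c,
\end{equation*}
so those conditions become $N a_0 + c\,N(N-1)/2 = W$ and $\eta = \tfrac{g}{4}\bigl(V-(N-1)c\bigr)$.

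Next I would choose $N$ and $V$ so that the constraint $0<\eta\le\delta$, equivalently $(N-1)c<V\le Nc$, is satisfied. Taking $V:=1$ and $N:=\lceil 1/c\rceil$ works: the definition of ceiling gives $(N-1)c<1\le Nc$, and since $0<c<1$ one has $N\ge 2$ of order $1/c\sim 1/\delta$, matching the estimate asserted in Theorem~\ref{main}. With $N$ and $V$ now fixed, pick any integer $W>cN(N-1)/2$, for instance $W:=\lceil cN(N-1)/2\rceil+1$. Then $a_0:=\bigl(W-cN(N-1)/2\bigr)/N>0$, and setting $t_0:=0$ the resulting sequence $(t_n)$ is strictly increasing and satisfies all three conditions by construction, with $\eta=(g/4)(1-(N-1)c)\in(0,\delta]$.

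The proof is essentially algorithmic once the first reduction is carried out: the only point that might have been an obstacle is to guarantee that the half-open interval $\bigl((N-1)c,Nc\bigr]$ of length $c<1$ contains a positive integer for some $N$, but this is dispatched by the explicit choice $V=1$, $N=\lceil 1/c\rceil$, which simultaneously yields the order $1/\delta$ estimate needed later for Theorem~\ref{main}.
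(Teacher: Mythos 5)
Your proposal is correct and follows essentially the same route as the paper: solve the second-difference recursion explicitly to get $t_n=t_0+na_0+\tfrac{4\delta}{g}\tfrac{n(n-1)}{2}$, fix $V=1$, choose $N$ with $\tfrac{g}{4\delta}\le N<\tfrac{g}{4\delta}+1$ so that $0<\eta\le\delta$, and then pick $W$ large enough to make the first gap positive. The only difference is the cosmetic choice of $W$ ($\lceil cN(N-1)/2\rceil+1$ instead of the paper's $N^2(N-1)$), which is immaterial.
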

\begin{proof}
We construct the sequence $(t_n)$ for $0\leq n \leq N$ for some $N$ to be fixed later. Fix $t_0=0$ and consider $t_1$ positive to be fixed later. Define, for every $0\leq n \leq N-1$ the increasing sequence
\begin{equation}\label{deftn}
t_{n+1}=\frac{4}{g}\delta+2t_n-t_{n-1}
\end{equation}
so that condition 3. is satisfied. Now let us adjust the constants $t_1,N,W,V$ and $\eta$ in order to satisfy conditions 1. and 2. Let us start by noticing that letting $t_0=0$, the formula
\begin{equation}\label{deftnn}
t_n=\frac{n(n-1)}{2}\frac{4}{g}\delta+nt_1
\end{equation}
holds for every $n\geq 0$ and $t_1>0$.
We use it to rewrite condition 1. as
\begin{equation}\label{un}
Nt_1+N(N-1)\frac{2\delta}{g}=W
\end{equation}
and condition 2. as
\begin{equation}\label{do}
\frac{4}{g}\eta+(N-1)\frac{4}{g}\delta=V
\end{equation}   
Now we just have to find $N,V,W\in\mathbb{N}\setminus \{0\}$, $t_1>0$ and $0<\eta\leq \delta$ such that (\ref{un}) and (\ref{do}) are satisfied. First consider (\ref{do}). Fix $V=1$ so that (\ref{do}) is equivalent to
\begin{equation}\label{defeta}
\eta=\frac{g}{4}-(N-1)\delta.
\end{equation}
Imposing $0<\frac{g}{4}-(N-1)\delta\leq \delta$ we get the condition
$$
\frac{g}{4\delta}\leq N<\frac{g}{4\delta}+1
$$ 
that is satisfied by some $N>1$. Using such $N$ we can define $\eta$ through (\ref{defeta}). Now we can consider (\ref{un}). We have 
$$
t_1=\frac{W}{N}-N(N-1)\frac{2\delta}{g}.
$$ 
If we chose $W=N^2(N-1)$ we can conclude noting that
$$
t_1=N(N-1)(1-\frac{2\delta}{g})>0.
$$
\end{proof}

The following proposition is concerned with the construction of the function $f$.

\begin{prop}\label{prop2}
Consider a pair of sequences $(t_k)_{0\leq k \leq N}$ and $(D_k)_{0\leq k \leq N}$ such that $t_k\leq t_{k+1}$ and $0\leq D_k\leq \delta$ for some $\delta>0$. 
Suppose that $t_N-t_0=W$ for some $W\in\mathbb{N}$ and $D_0=D_N$. 
Then there exists $f\in C^1(\mathbb{R}/\mathbb{Z})$ such that
\begin{enumerate}
\item $f(t_0)=f(t_1)=\dots =f(t_{N-1})$
\item $\dot{f}(t_k)=D_k$ for $1\leq k\leq N$
\item $||\dot{f}||_{C^0[0,1]}\leq\delta$
\end{enumerate} 
\end{prop}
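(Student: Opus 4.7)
The plan is to construct $f$ as a constant background value $c$ plus a small smooth ``bump'' localized near each of the points $s_k = t_k \bmod 1$ on the circle $\mathbb{T} = \mathbb{R}/\mathbb{Z}$. Since $t_N - t_0 = W \in \mathbb{N}$, one has $s_N = s_0$, so only the $N$ points $s_0, s_1, \ldots, s_{N-1}$ carry distinct information; I will assume they are pairwise distinct on $\mathbb{T}$ (this is automatic for the sequence produced by Lemma \ref{stand3} after, if needed, a negligible perturbation of $t_1$, and without such distinctness the requirement $\dot f(s_k) = D_k$ would already be inconsistent wherever $s_k = s_j$ but $D_k \neq D_j$).

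Fix once and for all a smooth reference bump $\phi \colon \mathbb{R} \to \mathbb{R}$ supported in $[-1, 1]$ with $\phi(0) = 0$, $\phi'(0) = 1$, $\phi(\pm 1) = \phi'(\pm 1) = 0$ and $\| \phi' \|_{C^0} = 1$. An explicit choice is $\phi(x) = x(1 - x^2)^2$: its derivative $\phi'(x) = (1 - x^2)(1 - 5x^2)$ satisfies $\phi'(0) = 1$, vanishes at $x = \pm 1$, and reaches its minimum value $-4/5$ at $x = \pm\sqrt{3/5}$, so $\|\phi'\|_{C^0} = 1$. Now choose $\epsilon > 0$ small enough that the arcs $[s_k - \epsilon, s_k + \epsilon]$ in $\mathbb{T}$ are pairwise disjoint for $k = 0, \ldots, N - 1$, pick any $c \in \mathbb{R}$, and set
\[
f(t) = c + \epsilon \sum_{k=0}^{N-1} D_k \, \phi\!\left( \frac{t - s_k}{\epsilon} \right),
\]
extended $1$-periodically to $\mathbb{R}$. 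Each summand is smooth and supported in a disjoint arc, so $f \in C^1(\mathbb{R}/\mathbb{Z})$.

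Verification is then routine. Since $\phi(0) = 0$ and at most one summand is nonzero at any $s_k$, we have $f(s_k) = c$ for every $k = 0, \ldots, N - 1$, which is condition 1. Differentiating, $\dot f(s_k) = D_k \phi'(0) = D_k$, which yields condition 2 for $1 \le k \le N - 1$ directly and for $k = N$ via $s_N = s_0$ together with the hypothesis $D_0 = D_N$. Finally $|D_k| \le \delta$ and $\|\phi'\|_{C^0} = 1$ give $\|\dot f\|_{C^0} \le \delta$, which is condition 3. The only substantive obstacle is the distinctness of the $s_k$ on $\mathbb{T}$, which is really a compatibility condition on the data rather than a genuine difficulty in the construction; any remaining flexibility is absorbed by perturbing $t_1$ (or equivalently $\epsilon$) infinitesimally in the intended application.
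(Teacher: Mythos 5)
Your construction is correct, and it takes a genuinely different route from the paper's. The paper builds the derivative first: it reduces the nodes modulo $1$, rearranges them monotonically, and defines a continuous piecewise linear $\zeta$ on each interval between consecutive nodes with $\zeta(t_k)=D_k$, letting $\zeta$ dip to a level $-C_k$ chosen so that the signed area over each interval vanishes; then $f(t)=\int_0^t\zeta(s)\,ds$ is $C^1$, the zero-area condition forces $f(t_0)=\dots=f(t_{N-1})$, and the bound $\|\dot f\|_{C^0}\leq\delta$ is arranged by taking the half-width $L_k$ small enough that $C_k<\delta$. You instead write $f$ directly as a constant plus disjointly supported rescaled bumps $\epsilon D_k\,\phi\bigl((t-s_k)/\epsilon\bigr)$ vanishing at each node with unit derivative there; conditions 1 and 2 then hold by inspection, and condition 3 is automatic from $\|\phi'\|_{C^0}=1$ together with the disjointness of the supports, with no area balancing required. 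The only cosmetic point is that $\phi(x)=x(1-x^2)^2$ must be truncated to $[-1,1]$ and extended by zero; since $\phi(\pm1)=\phi'(\pm1)=0$ this extension is $C^1$ (not smooth), which is all the statement needs.

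Both your proof and the paper's tacitly assume that $t_0,\dots,t_{N-1}$ are pairwise distinct modulo $1$: the paper needs nondegenerate intervals to fit its triangles (it requires $0<L_k<(t_{k+1}-t_k)/2$), and you need disjoint arcs for the bumps. As you note, without this the statement itself can fail (coinciding nodes carrying different $D$'s are incompatible with $1$-periodicity), so this is a defect shared with the paper's own argument rather than a flaw specific to your proposal. Be careful, however, with your suggested remedy for the application: in Lemma \ref{stand3} the value $t_1=W/N-N(N-1)\tfrac{2\delta}{g}$ is pinned by the integrality constraints, so an ``infinitesimal perturbation of $t_1$'' is not available; distinctness modulo $1$ would have to be secured through the choice of $W$ or by running the construction with a slightly smaller $\delta$, either of which leaves the conclusion of Theorem \ref{main} intact.
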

\begin{proof}
To fix the ideas, suppose that $t_0=0$. Consider the sequence $(t_k)_{0\leq k\leq N}$ modulo $1$ given by
\begin{equation*}
\begin{sistema}
t_k\mapsto t_k-[t_k]\quad \mbox{for } 0\leq k\leq N-1 \\
t_N\mapsto 1
\end{sistema}
\end{equation*}
where $[x]$ represents the integer part of $x$. We can rearrange the sequence supposing it to be monotone non-decreasing. To be consistent we will rearrange also the sequence $(D_k)$ following the permutation made on the sequence $(t_k)$. 
Now for $t\in[0,1]$ consider the function $\zeta(t)$ being piecewise linear defined for $t_k\leq t< t_{k+1}$, $0\leq k<N$ as in Figure \ref{etaa}.
\begin{figure}[h]\label{etaa}
\begin{center}
\includegraphics[scale=0.8]{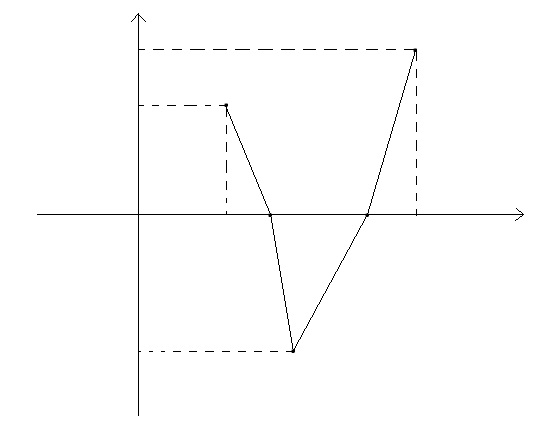}
\put(-10,115){$t$}
\put(-110,115){$B_k$}
\put(-80,115){$t_{k+1}$}
\put(-163,115){$A_k$}
\put(-270,40){$-C_k$}
\put(-195,115){$t_k$}
\put(-185,130){$L_k$}
\put(-272,220){$D_{k+1}$}
\put(-262,190){$D_{k}$}
\put(-97,130){$L_k$}
\end{center}
 \caption{The function $\zeta(t)$ for $t_k\leq t< t_{k+1}$}
 \end{figure}
With reference to the figure, the points $A_k$ and $B_k$ are determined by the positive quantity $L_k< \frac{t_{k+1}-t_k}{2}$ and the constant $C_k$ is such that $0<C_k<\delta$. If we were able to get the signed area between $t_k$ and $t_{k+1}$ to be zero, we would get the thesis extending $\zeta(t)$ to the whole $\mathbb{R}$ by periodicity and letting
$$
f(t)=\int_0^t\zeta(s)ds.
$$
We will prove that it is possible to construct such a function $\zeta$ finding suitable $C_k$ and $L_k$. Instead of giving cumbersome computations, let us think geometrically referring to the figure. The signed area between $t_k$ and $t_{k+1}$ is given by 
\[
\frac{L_k(D_k+D_{k+1})-C_k(t_{k+1}-t_k-2L_k)}{2}.
\] 
As we want it to be zero we get that 
$$
C_k=\frac{L_k(D_k+D_{k+1})}{t_{k+1}-t_k-2L_k}>0.
$$   
Remembering that we need $C_k<\delta$, we can conclude choosing $L_k$ such that
$$
L_k<\frac{\delta(t_{k+1}-t_k)}{D_{k+1}+D_k+2\delta}.
$$
\end{proof}
\begin{remark}
The function $f$ is of the type $f(t) = \delta^2F(t/\delta)$ for some oscillatory function $F$. More precisely, the function $F$ satisfies Proposition \ref{prop2} with $\delta=1$. Therefore, if a smooth modification of $f$ were possible, the higher derivatives would be large.
\end{remark}

We are ready for the 
\begin{proof}[Proof of theorem \ref{main}]
Given $\delta$, consider the sequence $(t^*_k)$ coming from Lemma \ref{stand3} and the corresponding constants $\eta$ and $N$. It comes from the proof that we have $t_0=0$ and $t_N=W\in\mathbb{N}\setminus \{0\}$. 
Now consider the  corresponding sequence $(D_k)$ defined as
\begin{equation}
\begin{split}
D_k= & \frac{g}{4}(t_{k+1}^*-2t^*_k+t^*_{k-1}) \mbox{ for } 1\leq k \leq N-1  \\
D_N= & D_0=  \eta.
\end{split}
\end{equation}
From condition 2 and 3 in Lemma \ref{stand3} we have
$$
0\leq D_k\leq\delta
$$
for every $0\leq k \leq N-1$. Thus we can apply Proposition \ref{prop2} to the sequences $(t^*_k)_{0\leq k \leq N-1}$ and $(D_k)_{0\leq k \leq N}$ to get the corresponding function $\bar{f}$. Now consider the corresponding map $P_{\bar{f}}$
\begin{equation}
\begin{sistema}
t_1=t_{0}+\frac{2}{g}v_{0}-\frac{2}{g}\bar{f}[t_1,t_{0}]
\\
v_1=v_{0}+2\dot{\bar{f}}(t_1)-2\bar{f}[t_1,t_{0}].
\end{sistema}
\end{equation}
Let $(\tau_k^*,\nu^*_k)$ the orbit with initial condition
$$
(t_0,v_0)=(t_0^*,\frac{g(t^*_1-t^*_0)}{2}).
$$ 
Remembering conditions 1 and 2 of proposition \ref{prop2} we have that $(\tau_k)=(t^*_k)$  and the corresponding sequence $(t_k^*,v_k^*)$ is an orbit of $P_{\bar{f}}$ satisfying the hypothesis of Proposition \ref{stand2}. Condition 3 of Lemma \ref{stand3} concludes the proof.   
 \end{proof}

\bibliographystyle{plain}
\bibliography{biblio4}

\end{document}